\let\uml\"
\title{Recursive formulas for the motivic Milnor basis}
\author{Jonas Irgens Kylling}
\address{Department of Mathematics, University of Oslo, Norway}
\email{jonasik@math.uio.no}
\thanks{The author was partially supported by the RCN Frontier Research Group Project no.~250399.}
\keywords{Steenrod algebra, Milnor primitives, motivic cohomology.}
\subjclass[2010]{14F42, 55S10}
\newcommand{\A}{\mathcal{A}}
\newcommand{\ZZ}{\mathbb{Z}}
\newcommand{\PP}{\mathcal{P}}
\newcommand{\Hom}{\operatorname{Hom}}
\newcommand{\Ext}{\operatorname{Ext}}
\newcommand{\tensor}{\otimes}
\newcommand{\Spec}{\operatorname{Spec}}
\newcommand{\DeltaT}{\widetilde{\Delta}}
\newcommand{\Sq}{\operatorname{Sq}}
\theoremstyle{theoremstyle}
\newtheorem{theorem}{Theorem}
\newtheorem*{theorem*}{Theorem}
\newtheorem{lemma}[theorem]{Lemma}
\newtheorem*{proposition*}{Proposition}
\newtheorem{corollary}[theorem]{Corollary}
\newtheorem*{corollary*}{Corollary}
\newtheorem{remark}[theorem]{Remark}
\newtheorem{remark*}{Remark}
\newtheorem{defn*}{Definition}
\theoremstyle{definition}
\theoremstyle{theoremstyle}
\begin{document}
 
\begin{abstract}  
  We give recursive formulas for the generating elements in the Milnor basis of the mod 2 motivic Steenrod algebra.
\end{abstract}
\maketitle
\tableofcontents

The Steenrod algebra plays an important role in topology in its capacity of acting on the singular cohomology ring of any topological space.
The motivic Steenrod algebra, introduced by Voevodsky in \cite{Voevodsky:power} (and developed further in \cite{Riou} and \cite{HKO}), shares many properties of its topological namesake. Its main applications include the proof of the Milnor conjecture \cite{Voevodsky:Z/2}, the Hopkins-Morel-Hoyois isomorphism \cite{Hoyois}, the Milnor conjecture on quadratic forms \cite{slices}, and computations of motivic stable stems \cite{Morel:CRAS}, \cite{DI:MASS} and \cite{April1}.

In this paper we will work over a field \cite{Voevodsky:power}, an essentially smooth scheme over a field \cite{HKO}, or more generally a base scheme with no points of residue characteristic two \cite{Spitzweck}.
Then the dual of the mod 2 motivic Steenrod algebra is a Hopf algebroid and is free on a monomial basis \cite{Voevodsky:power}.
Hence it is possible to consider the dual basis of its monomial basis. This is called the Milnor basis of the motivic Steenrod algebra.
In the classical setting \cite{Milnor}, the Milnor primitives in this basis are given by the recursive formula for $n \geq 0$:
\begin{equation}
  \label{top}
  Q_{n+1} = [Q_n, \Sq^{2^{n+1}}].
\end{equation}
By convention $Q_0$ is the Bockstein operator $\Sq^1$.
The bracket is standard notation for the commutator $[A, B] := AB - BA$.
The main result of this paper shows an analogous formula for the Milnor basis in the motivic Steenrod algebra:
\begin{equation}
  \label{mot}
  Q_{n+1} = [Q_n, \Sq^{2^{n+1}}] + \rho Q_n \Sq^{2^n} Q_{n-1}.
\end{equation}
The formula is a corollary of \Cref{main}.
Here  $n \geq 0,\, Q_{-1} := 0$,
$Q_0 = \Sq^1$ is the motivic Bockstein operator,
and $\rho$ is the class of $-1$ in
\[
H^{1,1}(\Spec k; \ZZ/2) \cong k^\times/(k^\times)^2.
\]
If $\rho = 0$ (i.e., if and only if $-1$ has a square root in $k$), then (\ref{mot}) is identical to (\ref{top}) (up to inverting $\tau$ and restricting to elements in weight 0).

A more indirect recursive formula for $Q_n$, involving the dual $\PP^n$ of $\xi_n$ (an element of the dual Steenrod algebra, see \Cref{dual}),
was shown by Voevodsky in \cite[Proposition 13.6]{Voevodsky:power}; that is,
\[
Q_n = [Q_0, \PP^n],\ n \geq 1.
\]
We provide a formula for calculating $\PP^n$ in \Cref{main2}.

The Milnor primitives $Q_0 = \Sq^1$ and $Q_1 = \Sq^2 \Sq^1 + \Sq^3$ are the same as their topological counterparts.
The primitive $Q_2 = [Q_1, \Sq^4] + \rho Q_0Q_1 \Sq^2$ is given in \cite[Example 13.7]{Voevodsky:power}.
In the mod $p > 2$ motivic Steenrod algebra the usual topological formula for $Q_n$ (\cite[Corollary 2]{Milnor}) remains true, since the subalgebra generated by the Bockstein operator and the power operations is isomorphic to the topological Steenrod algebra \cite[Theorem 10.3]{Voevodsky:power}.
Our recursive formula for $p = 2$ is hinted to prior to Lemma 5.13 in \cite{Voevodsky:Z/l}.

This paper is also an initial attempt to find a general product formula in the motivic Milnor basis. Such a product formula could for instance be used to extend Bob Bruner's $\Ext$-program \cite{Bruner} to the motivic setting. The same approach used below can be used to determine other products in the motivic Milnor basis.
\subsection*{Acknowledgements}
I am grateful to Bob Bruner and Paul Arne {\O}stv{\ae}r for advice and valuable suggestions.
I thank the anonymous referee for helpful comments and suggestions to improve the article.
\section{Recollections on the motivic Steenrod algebra}
We review results on the mod 2 motivic Steenrod algebra. These results are found in \cite{Voevodsky:power}, \cite{Voevodsky:Z/l}, \cite{Riou}, \cite{HKO} and \cite{Spitzweck}.
Henceforth we drop the ``mod 2'' when we speak of the mod 2 motivic Steenrod algebra, and mod 2 motivic cohomology.

The motivic Steenrod algebra $\A^{\star}$ is the algebra of graded bistable endomorphisms of motivic cohomology.
It is an algebra over the motivic cohomology of a point $H^{\star} := H^{\star}(\Spec k; \ZZ/2)$.
As an algebra over $H^\star$ it is generated by the motivic Steenrod squares, $\Sq^{i}$,
which are subject to the motivic Adem relations.
In fact, it is a free $H^{\star}$-module with basis the admissible polynomials $\Sq^I := \Sq^{i_0} \Sq^{i_{1}} \dots \Sq^{i_n} \dots$, where $I = (i_0, i_1, \dots)$, such that $i_j \geq 2 i_{j+1} \geq 0$.
The dual motivic Steenrod algebra is defined as $\A_\star := \Hom(\A^\star, H^\star)$.
The dual motivic Steenrod algebra is given by
\begin{equation}
\A_\star \cong \frac{H_\star[\tau_0, \tau_1, \ldots, \xi_1, \xi_2, \ldots]}
{\tau_i^2 + \tau \xi_{i+1} + \rho(\tau_{i+1} + \tau_0\xi_{i+1})},
\label{dual}
\end{equation}
where $\tau_i$ has degree $(2^{i+1} - 1, 2^i - 1)$ and $\xi_i$ has degree $(2^{i+1} - 2, 2^i - 1)$.
Here we use the standard convention $H_\star := H^{-\star}$.
Recall that $\tau$ is the generator of $H^{0,1} \cong \mu_2(k)$,
and $\rho$ is the class of $-1$ in $H^{1,1} \cong k^\times/(k^\times)^2$.
The dual is a Hopf algebroid with structure maps
\begin{align}
  \eta_L x &= x = \eta_R x, \quad x \in H_{1,1}, \nonumber \\
  \eta_L\tau &= \tau,\quad  \eta_R\tau = \tau + \rho\tau_0, \nonumber \\
  \epsilon\tau_i &= 0,\quad \epsilon\xi_i = 0, \nonumber \\
  \Delta x &= x \tensor 1,\quad x \in H_\star, \nonumber \\
  \Delta \xi_k &= \sum_{i=0}^k \xi_{k-i}^{2^i} \tensor \xi_i, \label{coprod1} \\
  \Delta \tau_{k} &= \tau_{k} \tensor 1 + \sum_{i=0}^{k} \xi_{k- i}^{2^i}\tensor \tau_i. \label{coprod2}
\end{align}
In the coproducts (\ref{coprod1}) and (\ref{coprod2}), $\xi_0 = 1$.
We recall the definition of the coproduct:
Evaluation induces a bilinear pairing
\begin{align*}
\langle -, -\rangle : \A_\star \tensor_{l,l} \A^\star  \to H^\star \\
\langle \alpha, \theta \rangle := \alpha(\theta).
\end{align*}
The tensor product $\tensor_{l,l}$ is formed with respect to the left module structure on both $\A^\star$ and $\A_\star$ (i.e., the module structure induced by $\eta_L$).
There is a bilinear pairing
\begin{align*}
  [-, -]:
  (\A_\star \tensor_{r,l} \A_\star)
  \tensor_{l, l}
  (\A^\star \tensor_{r,l} \A^\star)
  \to
  H^\star \\
  [\alpha_1 \tensor \alpha_2, \theta_1 \tensor \theta_2]
  := \langle\alpha_1, \theta_1 \langle\theta_2, \alpha_2 \rangle \rangle.
\end{align*}
The tensor products $\tensor_{r, l}$ are formed with respect to the right and left $H^\star$-module structures.
Then the coproduct is defined by the identity
\[
[ \Delta \alpha, \theta_1 \tensor \theta_2 ] = 
\langle \alpha, \theta_1 \theta_2 \rangle.
\]
Hence the coproduct is a map $\Delta : \A_\star \to \A_\star \tensor_{r,l} \A_\star$.

The dual motivic Steenrod algebra is a free $H_\star$-module with basis the monomials
\[
\tau(E)\xi(R) := \prod \tau_i^{\epsilon_i}\prod \xi_{i}^{r_i}.
\]
Here $E = (\epsilon_0, \dots, \epsilon_n)$, $\epsilon_i \in \{0, 1\}$,
and $R = (r_1, \dots, r_m)$ is a finite sequence of nonnegative integers.
We define $\widehat{\tau(E)\xi(R)}$ to be the dual of $\tau(E)\xi(r)$ in $\A^\star$ with respect to the monomial basis.
Then the set $\{\widehat{\tau(E)\xi(R)}\}_{E,R}$ forms a basis for $\A^\star$, called the Milnor basis.
We define $Q(E) := \widehat{\tau(E)}$, $\PP^R := \widehat{\xi(R)}$, $Q_i := \widehat{\tau_i}$ and $\PP^i := \widehat{\xi_i}$.
\begin{lemma}[\protect{\cite[Lemma 13.1, Propositions 13.2, 13.4]{Voevodsky:power}}]
  \label{known}
  We have:
  \begin{align*}
    \PP^{(n)} &= \Sq^{2n}, \\
    \widehat{\tau(E)\xi(R)} &= Q(E)\PP^R, \\
    Q(E) &= \prod Q_{i}^{\epsilon_i}, \\
    [Q_i, Q_j] &= 0, \\
    Q_i^2 &= 0.
  \end{align*}
\end{lemma}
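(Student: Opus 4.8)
The plan is to treat the four identities not involving $\PP^{(n)}$ by the duality between the multiplication on $\A^\star$ and the comultiplication on $\A_\star$ --- the same tool that drives the rest of the paper --- and to handle $\PP^{(n)}=\Sq^{2n}$ separately via the Steenrod action on a universal example. For the first part, unwinding the defining identity $[\Delta\alpha,\theta_1\tensor\theta_2]=\langle\alpha,\theta_1\theta_2\rangle$ for the coproduct shows that, for basis monomials $m,m_1,m_2$ of $\A_\star$, the coefficient of the Milnor basis element $\widehat{m}$ in the product $\widehat{m_1}\,\widehat{m_2}\in\A^\star$ equals the coefficient of $m_1\tensor m_2$ in $\Delta m$. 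In every comparison below this coefficient is $0$ or $1$ --- no factor of $\tau$ or $\rho$, and no reduction via $\tau_i^2=\tau\xi_{i+1}+\rho(\tau_{i+1}+\tau_0\xi_{i+1})$, ever enters --- so the asymmetry $\eta_L\neq\eta_R$ of the Hopf algebroid plays no role and one reads coefficients off against the monomial basis of $\A_\star\tensor_{r,l}\A_\star$ exactly as over a field.

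With this in hand, the four identities reduce to a triangularity computation on $\Delta$. Fix a basis monomial $\gamma=\tau(E'')\xi(R'')$ and inspect those terms $a\tensor b$ of $\Delta\gamma=\prod(\Delta\tau_i)^{\epsilon''_i}\prod(\Delta\xi_i)^{r''_i}$ whose left factor $a$ is a product of distinct $\tau_i$'s and contains no $\xi$: by \eqref{coprod1}--\eqref{coprod2}, each $\Delta\xi_i$ must then contribute its term $1\tensor\xi_i$ and each $\Delta\tau_i$ must contribute $\tau_i\tensor1$ or $1\tensor\tau_i$ (the mixed terms would put a $\xi$ into $a$), so $a=\tau(E_1)$ for some subset $E_1\subseteq E''$, $b=\tau(E''\setminus E_1)\,\xi(R'')$, and the coefficient of $a\tensor b$ is $1$. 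Specializing: if $b$ is required free of $\tau$'s, then $E_1=E''$ and $R''=R$, so $\widehat{\tau(E)}\,\widehat{\xi(R)}=\widehat{\tau(E)\xi(R)}$, i.e.\ $\widehat{\tau(E)\xi(R)}=Q(E)\PP^R$; if $b$ is required free of $\xi$'s, then $R''=\emptyset$ and $E''=E_1\sqcup E_2$, so $\widehat{\tau(E_1)}\,\widehat{\tau(E_2)}=\widehat{\tau(E_1\sqcup E_2)}$ for disjoint $E_1,E_2$, which applied repeatedly gives $Q(E)=\widehat{\tau(E)}=\prod Q_i^{\epsilon_i}$ and, with $E_1=\{i\}$ and $E_2=\{j\}$ for $i\neq j$, gives $Q_iQ_j=\widehat{\tau_i\tau_j}=\widehat{\tau_j\tau_i}=Q_jQ_i$, i.e.\ $[Q_i,Q_j]=0$. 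Finally $a\tensor b=\tau_i\tensor\tau_i$ never occurs, since it would demand $\{i\}=E_1\subseteq E''$ and $E''\setminus E_1=\{i\}$ at once; hence $Q_i^2=0$.

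The remaining identity $\PP^{(n)}=\Sq^{2n}$, in which $\PP^{(n)}=\widehat{\xi_1^n}$ is the dual of $\xi_1^n$, is the only one that genuinely uses the construction of the Steenrod squares rather than Hopf-algebroid formalism, and I expect it to be the main obstacle. Here I would follow Milnor's classical argument in the motivic form of \cite[Lemma 13.1]{Voevodsky:power}: evaluate the $\A^\star$-action on the motivic cohomology of $B\mu_2$ and the associated Thom spaces --- the universal examples whose $\A_\star$-coactions define the $\xi_i$ and $\tau_i$ --- using the Cartan formula together with the unstable relations $\Sq^2 v=v^2$, $\Sq^1 u=\beta(u)=v$, $\Sq^i v=0$ for $i\notin\{0,2\}$, and $\Sq^i u=0$ for $i\notin\{0,1\}$, where $u\in H^{1,1}$ is the canonical class and $v=\beta(u)\in H^{2,1}$. (The motivic novelty is that $\Sq^1$ sends $u$ to $v$, not to $u^2$, which is not even a basis element.) On $H^\star(B\mu_2)$ the coaction is $\psi(v)=v\tensor1+v^2\tensor\xi_1$, hence $\psi(v^m)=\sum_k\binom{m}{k}v^{m+k}\tensor\xi_1^k$; comparing this with $\Sq^{2n}(v^m)=\binom{m}{n}v^{m+n}$ (Cartan), and running the analogous comparison over the Thom spaces, forces $\langle\xi_1^n,\Sq^{2n}\rangle=1$ and $\langle m,\Sq^{2n}\rangle=0$ for every other monomial $m$ of bidegree $(2n,n)$, that is $\Sq^{2n}=\widehat{\xi_1^n}=\PP^{(n)}$. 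The part needing care beyond the classical computation is the Thom-space bookkeeping, together with the check that the $\rho$-term in $H^\star(B\mu_2)$ and the twist $\Sq^1 u=v$ leave the formula $\psi(v^m)=\sum_k\binom{m}{k}v^{m+k}\tensor\xi_1^k$ intact --- which they do, because $v$ still has total square $v(1+v)$ with no $u$-component.
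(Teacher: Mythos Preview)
The paper gives no proof of this lemma at all; it is simply quoted from \cite[Lemma 13.1, Propositions 13.2, 13.4]{Voevodsky:power}, so there is no argument in the paper to compare yours against. Your treatment of the four ``algebraic'' identities is correct and is essentially Voevodsky's: once one observes that in $\Delta(\tau(E'')\xi(R''))=\prod(\Delta\tau_k)^{\epsilon''_k}\prod(\Delta\xi_j)^{r''_j}$ the only terms whose left tensor factor is $\xi$-free come from choosing $\tau_k\tensor1$ or $1\tensor\tau_k$ out of each $\Delta\tau_k$ and $1\tensor\xi_j$ out of each $\Delta\xi_j$, the identities $\widehat{\tau(E)\xi(R)}=Q(E)\PP^R$, $Q(E)=\prod Q_i^{\epsilon_i}$, $[Q_i,Q_j]=0$, and $Q_i^2=0$ drop out exactly as you say, and no $\tau_i^2$-relation or $\eta_R$-twist is ever invoked because all coefficients encountered are $0$ or $1$.

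Your sketch for $\PP^{(n)}=\Sq^{2n}$, however, has a genuine gap. The coaction on $v\in H^{2,1}(B\mu_2)$ is $\psi(v)=\sum_{i\ge0}v^{2^i}\tensor\xi_i$, not the truncation $v\tensor1+v^2\tensor\xi_1$; consequently $\psi(v^m)$ contains many terms beyond $\sum_k\binom{m}{k}v^{m+k}\tensor\xi_1^k$, and the comparison with $\Sq^{2n}(v^m)=\binom{m}{n}v^{m+n}$ on a single $B\mu_2$ does not separate $\xi_1^n$ from the other $\xi(R)$ of bidegree $(2n,n)$. (By the bidegree identity $p-2q=\sum\epsilon_i$, these $\xi(R)$ are the \emph{only} competitors --- no monomial with a $\tau$-factor has bidegree $(2n,n)$ --- so your appeal to Thom spaces is unnecessary here.) The standard repair, as in Milnor's original argument and Voevodsky's Lemma 13.1, is to pass to $H^\star((B\mu_2)^{\times N})$: on $v_1\cdots v_N$ the coaction produces terms $v_1^{2^{i_1}}\cdots v_N^{2^{i_N}}\tensor\xi_{i_1}\cdots\xi_{i_N}$ with pairwise distinct left factors, and comparing with $\Sq^{2n}(v_1\cdots v_N)$ computed via Cartan then forces $\langle\xi(R),\Sq^{2n}\rangle=\delta_{R,(n)}$.
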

\section{Proof of main result}
\begin{theorem}
  \label{main}
  The following equality holds in the motivic Steenrod algebra for $n \geq 0$:
  \[
  \Sq^{2^{n+1}}Q_{n} = Q_{n+1} + \sum_{k=0}^{n} \rho^k Q_{n-k}Q_{n-k+1}\dots Q_n \Sq^{2^{n+1 - k}}.
  \]
\end{theorem}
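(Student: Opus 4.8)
The plan is to compare the two sides coefficientwise in the Milnor basis, evaluating against the monomial basis of $\A_\star$; since the Milnor basis is dual to the monomial basis, an element of $\A^\star$ is pinned down by the scalars $\langle\tau(E)\xi(R),-\rangle$. For the left-hand side, \Cref{known} gives $\Sq^{2^{n+1}} = \PP^{(2^n)} = \widehat{\xi_1^{2^n}}$ and $Q_n = \widehat{\tau_n}$, so the definition of the coproduct yields
\[
\langle\tau(E)\xi(R),\,\Sq^{2^{n+1}}Q_n\rangle = [\Delta(\tau(E)\xi(R)),\,\widehat{\xi_1^{2^n}}\tensor\widehat{\tau_n}],
\]
which is exactly the coefficient of $\xi_1^{2^n}\tensor\tau_n$ in $\Delta(\tau(E)\xi(R))$ relative to the monomial basis of $\A_\star\tensor_{r,l}\A_\star$. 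For the right-hand side, \Cref{known} identifies the $k$th summand with $\rho^k\widehat{\tau_{n-k}\cdots\tau_n\xi_1^{2^{n-k}}}$ and $Q_{n+1}$ with $\widehat{\tau_{n+1}}$, so (as $\rho$ factors out of the pairing) the right-hand side pairs to $1$ with $\tau_{n+1}$, to $\rho^k$ with $\tau_{n-k}\cdots\tau_n\xi_1^{2^{n-k}}$ for $0\le k\le n$, and to $0$ with every other monomial. So the theorem reduces to computing these coproduct coefficients.

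Next I would expand $\Delta(\tau(E)\xi(R)) = \prod_{\epsilon_i=1}\Delta\tau_i\cdot\prod_j(\Delta\xi_j)^{r_j}$ via (\ref{coprod1}), (\ref{coprod2}) and track what can land in the left, resp.\ right, tensor slot. On the left slot the quadratic relation of (\ref{dual}) can never be applied (each $\tau_i$ occurs there with exponent $\le1$), so a $\tau$ on the left is permanent; hence no $\Delta\tau_i$ may use its term $\tau_i\tensor1$, and for the left slot to become the pure power $\xi_1^{2^n}$ one checks that every $\Delta\xi_j$ must use $\xi_j\tensor1$ and that no $\xi_j$ with $j\ge2$ can occur in the monomial — so only monomials $\mu = \tau(E)\xi_1^r$ contribute. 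For such $\mu$ each $\Delta\tau_i$ ($\epsilon_i=1$) contributes either $(1,\tau_i)$ or $(\xi_1^{2^{i-1}},\tau_{i-1})$ to (left, right), a binary choice; matching the left slot to $\xi_1^{2^n}$ forces the bidegree constraint $r + \sum_{\epsilon_i=1}2^i = 2^{n+1}$ and determines which generators make which choice. Collecting the right slot and reducing it by repeated use of $\tau_i^2 = \tau\xi_{i+1} + \rho(\tau_{i+1}+\tau_0\xi_{i+1})$ — most efficiently done in $\bar{\A} := \A_\star/(\tau,\xi_1,\xi_2,\dots)\cong\mathbb{F}_2[\rho][\tau_0,\tau_1,\dots]/(\tau_i^2+\rho\tau_{i+1})$, which after inverting $\rho$ becomes the polynomial ring $\mathbb{F}_2[\rho^{\pm1}][\tau_0]$ via $\tau_i\mapsto\rho^{-(2^i-1)}\tau_0^{2^i}$ — one finds that the $\tau_n$-coefficient produced equals $\rho^{s-1}$, where $s$ is the number of $\tau$-factors, precisely when a certain exponent sum equals $2^n$, and is $0$ otherwise.

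Assembling the two reductions turns the theorem into a purely arithmetic assertion: writing $A = \{i:\epsilon_i=1\}$ and letting $R$ be the set of exponents in the binary expansion of $r$, the coefficient of $\tau(E)\xi_1^r$ is $\rho^{|A|-1}$ when $R\subseteq A$ and $r + \sum_{a\in A}2^a = 2^{n+1}$, and $0$ otherwise (the auxiliary condition $0\in A\Rightarrow0\in R$ dropping out automatically, since $\sum_{a\in A}2^a - r = 2^{n+1}-2r$ is even). To finish, I would solve this: setting $B = A\setminus R$ gives $\sum_{b\in B}2^b = 2(2^n-r)$, so $B-1$ is the binary support of $2^n-r$, and the disjointness $R\cap B = \emptyset$ says exactly that whenever a bit $i\ge1$ is set in $r$ the bit $i-1$ is unset in $2^n-r$. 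Tracing the carries in the addition $r + (2^n-r) = 2^n$ under this restriction forces $r\in\{0\}\cup\{2^m:0\le m\le n\}$, with $A = \{n+1\}$ in the first case and $A = \{m,m+1,\dots,n\}$ in the others, yielding coefficients $1$ and $\rho^{n-m}$; reindexing $k = n-m$ matches the right-hand side exactly, and the case $n=0$ is included, so no separate base case is needed.

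The main obstacle is this last combinatorial step — keeping control of the carries in $r + (2^n-r) = 2^n$ subject to the bit-disjointness condition — together with the careful bookkeeping in the coproduct expansion, where the delicate point is that it is precisely the quadratic relation of (\ref{dual}), applied to the right tensor slot, that manufactures the higher primitives $Q_m$ and the powers of $\rho$ appearing on the right-hand side of \Cref{main}. Everything else is formal manipulation with the Hopf algebroid structure maps.
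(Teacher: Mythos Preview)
Your approach is essentially the paper's: reduce to finding the coefficient of $\xi_1^{2^n}\otimes\tau_n$ in $\Delta(\tau(E)\xi(R))$, argue that only monomials $\tau(E)\xi_1^r$ can contribute (since any $\xi_j$ with $j\ge2$ or any $\tau_i$ in the left slot is permanent), expand the coproduct of $\tau(E)$ as a product of binary choices, and analyze the resulting product of $\tau$'s in the right slot modulo the $\xi$'s, where the quadratic relation collapses to $\tau_i^2=\rho\tau_{i+1}$. The paper packages the first reduction via explicit ideals $I_n,J_n$ and the quotient map $\widetilde{\Delta}$, but the content is the same as your ``left slot must be a pure $\xi_1$-power / right slot must be a pure $\tau$-word'' filtering.

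The one substantive difference is how the $\tau_n$-coefficient of the right-slot product is extracted. The paper does this by an inductive case analysis (its \Cref{induction}), directly characterizing the ``decreasing'' sequences $n{-}1,n{-}2,\dots$ for which $\prod\tau_{j_i}$ collapses to $\rho^{l-1}\tau_n$. You instead linearize via the embedding $\tau_i\mapsto\rho^{-(2^i-1)}\tau_0^{2^i}$ into $\mathbb{F}_2[\rho^{\pm1}][\tau_0]$, which makes the condition $\sum 2^{j_i}=2^n$ transparent, and then finish with the binary-carry argument in $r+(2^n-r)=2^n$ under the bit-disjointness constraint. Your route trades the paper's case analysis for a clean arithmetic characterization followed by a short carry chase; the paper's route is more self-contained and never needs to invert $\rho$. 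Both arrive at exactly the same list of contributing monomials $\tau_{n+1}$ and $\tau_{n-k}\cdots\tau_n\xi_1^{2^{n-k}}$ with the predicted coefficients.
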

\begin{proof}
  The right hand side is written in the Milnor basis.
  Hence, it suffices to evaluate $\Sq^{2^{n+1}}Q_n$ on the monomials $\tau(E)\xi(R)$.
  Let $a_k$ be the sequence with $1$ in the $k$'th position and $0$'s elsewhere.
  For $0 \leq k \leq n$, let $b_{n,k}$ be the sequence with $1$'s in position $n-k$ to $n$ inclusive, and $0$'s elsewhere.
  With these definitions the equality in \Cref{main} takes the form
  \[
  \langle \tau(E)\xi(R), \Sq^{2^{n+1}} Q_n \rangle = \begin{cases}
    1 & \text{if}\ E=a_{n+1}, R=\mathbf{0}, \\
    \rho^i & \text{if}\ E=b_{n,i}, R=(2^{n-i}),\, 0 \leq i \leq n, \\
    0 & \text{otherwise}.
    \end{cases}
  \]
  Equivalently, the monomials $\tau(E)\xi(R)$, for pairs $(E, R)$ as above,
  are the only monomials for which an $H_\star$-multiple of $\xi_1^{2^n}\tensor \tau_n$ occurs in the coproduct.

  From (\ref{coprod1}) and (\ref{coprod2}) it follows that only the coproducts of elements $\tau_k$ contain terms of the form $\xi_1^j \tensor \tau_{i}$. Hence it suffices to consider the coproducts of monomials $\tau(E)$, as the factor $\xi(R)$ is completely determined by $\tau(E)$.
  
  Consider the ideals
  \begin{align*}
    I_n &:= (\xi_1^{2^{n}+1}, \xi_2, \xi_3, \dots, \tau_0\xi^{2^n}_1, \tau_0\xi_2, \tau_0\xi_3, \dots \tau_1, \tau_2, \dots), \\
    J_n &:= (\xi_1, \xi_2, \dots, \tau_0\tau_n, \tau_{n+1}, \tau_{n+2}, \dots),
  \end{align*}
  and the composition
  \[
  \DeltaT : \A_\star \xrightarrow{\Delta} \A_\star \tensor_{r,l} \A_\star \xrightarrow{\widetilde{(-)}} \A_\star/I_n \tensor_{r,l} \A_\star/J_n.
  \]
  Since $[I_n \tensor_{r,l} \A_\star, \Sq^{2^{n+1}} \tensor Q_n ] = 0$
  and $[\A_\star \tensor_{r,l} J_n, \Sq^{2^{n+1}} \tensor Q_n] = 0$
  we get an induced map $[\widetilde{(-)}, \Sq^{2^{n+1}} \tensor Q_n] : \A_\star/I_n \tensor_{r,l} \A_\star/J_n \to H_\star$,
  such that
  \[
  [M, \Sq^{2^{n+1}} \tensor Q_n] = [\widetilde{M}, \Sq^{2^{n+1}} \tensor Q_n].
  \]
  In particular,
  $[\Delta(M), \Sq^{2^{n+1}} \tensor Q_n] = [\DeltaT(M), \Sq^{2^{n+1}} \tensor Q_n]$.
  Hence it suffices to evaluate $\DeltaT$ on the monomial basis.

  With these definitions, the coproduct formula (\ref{coprod2}) implies
  \[
  \DeltaT (\tau_k) = 1 \tensor \tau_k + \xi_1^{2^{k-1}} \tensor \tau_{k-1},
  \]
  for $k \geq 1$.
  We have $\DeltaT(\tau_0) = 1 \tensor \tau_0 + \tau_0 \tensor 1$, but we will see that the last term can be ignored.

  We claim that $\DeltaT(\tau(E))$ contains a nonzero $H_\star$-multiple of $\xi_1^j \tensor \tau_n$
  if and only if $E = a_{n+1}$, or $E = b_{n,k}$ for some $0 \leq k \leq n$.
  Since $\DeltaT(\tau_{n+1}) = \xi_{1}^{2^{n}}\tensor \tau_n$,
  we get $\DeltaT(M) = 0$ for any monomial $M$ such that $\tau_{n+1}$ divides $M$ and $\tau_{n+1} \neq M$.
  Thus we may restrict to monomials $\tau(E)$, for $E=(\epsilon_0, \epsilon_1, \dots)$ where $\epsilon_i = 0$ for $i > n$.

  Consider $n \geq i_1 > i_2 > \dots > i_l$ and the sequence $E = (\epsilon_0, \epsilon_1, \dots)$ which is nonzero in the positions $i_j$
  (that is, $\epsilon_j = 1$ if and only if $j \in \{i_1, \dots, i_l\}$).
  By expanding the terms in the coproduct $\DeltaT(\tau(E))$ we get
  \begin{align*}
    \DeltaT(\tau_{i_1}\tau_{i_2} \dots \tau_{i_l}) =
    &(1 \tensor \tau_{i_1} + \xi_1^{2^{i_1-1}}\tensor\tau_{i_1-1}) \\
    &(1 \tensor \tau_{i_2} + \xi_1^{2^{i_2-1}}\tensor\tau_{i_2-1}) \\
    &\qquad\qquad\vdots \\
    &(1 \tensor \tau_{i_l} + \xi_1^{2^{i_l-1}}\tensor\tau_{i_l-1}).
  \end{align*}
  We claim that $\DeltaT(\tau(E))$ contains an $H_\star$-multiple of $\xi^{2^{j}}_1 \tensor \tau_{n}$ if and only if $i_1 = n, i_2 = n-1, \dots, i_{l-1} = n-(l-2), i_{l} = n - (l-1)$,
  in which case it contains the term $\rho^{l-1}\xi_1^{2^{n} - 2^{n-l+1}} \tensor \tau_{n}$.
  Indeed, any term is of the form
  \[
  \prod_{j \in J} \xi_{1}^{2^{{i_j}-1}} \tensor \prod_{j \in J} \tau_{i_j - 1} \prod_{j \in [1, l]\setminus J} \tau_{i_j}
  \]
  for some $J \subseteq [1, l]$.
  If $i_l = 0$ there are also terms of the form
  \[\tau_0 \prod_{j \in J'} \xi_1^{2^j} \tensor \prod_{j \in J''} \tau_j.\]
  These terms can be ignored as no multiple of $\tau$ appears in $\prod_{j \in J''} \tau_j$.
  The theorem follows now from \Cref{induction} below.
\end{proof}
\begin{lemma}
  \label{induction}
  Consider a sequence of $l \geq 2$ nonnegative integers $\{j_i\}_{i=1}^{l}$ such that $n > j_1 \geq j_2 \geq \dots \geq j_l$ and $j_i > j_{i+2}$.

  Then the product $\tau_{j_1}\dots \tau_{j_l}$ is an $H_\star$-multiple of $\tau_{j_1+1}$ modulo $J_n$ if and only if
  $j_2 = j_1 - 1, \dots, j_{i} = j_1 - i, \dots, j_{l-1} = j_1 - (l-1), j_{l} = j_1-(l-1)$,
  in which case the product equals $\rho^{l-1} \tau_{j_1 + 1}$, and we say that the sequence is decreasing.
  Otherwise the product equals $\rho^p\tau_{j_1'}\dots\tau_{j_k'}$,
  for some $p \geq 0 $, $k \geq 2$ and $j_1 + 1 \geq j_1' > \dots > j_k'$.
\end{lemma}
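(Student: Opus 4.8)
The plan is to compute entirely inside the quotient ring $\A_\star/J_n$, which has a very transparent structure. Since $\xi_i \in J_n$ for every $i \geq 1$, the defining relation of $\A_\star$ collapses to $\tau_i^2 \equiv \rho\tau_{i+1} \pmod{J_n}$, while $\tau_m \equiv 0$ for $m \geq n+1$ and $\tau_0\tau_n \equiv 0$. Because $j_1 < n$, every index that arises while reducing $\tau_{j_1}\cdots\tau_{j_l}$ remains $\leq n$, so the only relation ever invoked is $\tau_i^2 = \rho\tau_{i+1}$; moreover $\A_\star/J_n$ is commutative, so the factors may be permuted freely, and the hypotheses $j_1 \geq \cdots \geq j_l$ together with $j_i > j_{i+2}$ say precisely that no value occurs more than twice, so $\tau_{j_1}\cdots\tau_{j_l}$ is a product of blocks $\tau_a$ and $\tau_a^2$. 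I would then argue by strong induction on $l$. The base case $l = 2$ is immediate: if $j_1 = j_2$ then $\tau_{j_1}\tau_{j_2} = \tau_{j_1}^2 = \rho\tau_{j_1+1} = \rho^{l-1}\tau_{j_1+1}$ (the decreasing case), and if $j_1 > j_2$ the product is the monomial $\tau_{j_1}\tau_{j_2}$, already of the stated form with $k = 2$, $p = 0$. For $l \geq 3$ I split according to whether the top index is repeated.

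Suppose first $j_1 > j_2$. Then $\tau_{j_1}$ is a free factor, and the induction hypothesis applies to $(j_2,\dots,j_l)$, which is still weakly decreasing, has no value thrice, and has lead $j_2 \leq j_1 < n$. Write its reduction uniformly as $\rho^a\tau_{m_1}\cdots\tau_{m_r}$ with $j_2 + 1 \geq m_1 > \cdots > m_r$, the decreasing alternative being $r = 1$, $m_1 = j_2 + 1$, $a = l-2$. Multiplying back by $\tau_{j_1}$ and using $j_1 \geq j_2 + 1 \geq m_1$: if $j_1 > m_1$, the result $\tau_{j_1}\tau_{m_1}\cdots\tau_{m_r}$ is a strictly decreasing monomial of length $r + 1 \geq 2$ with top index $j_1 \leq j_1 + 1$; if $j_1 = m_1$ (which forces $j_1 = j_2 + 1$), then $\tau_{j_1}\tau_{m_1} = \rho\tau_{j_1+1}$ and the result is $\rho^{a+1}\tau_{j_1+1}\tau_{m_2}\cdots\tau_{m_r}$, which reduces to the bare element $\rho^{l-1}\tau_{j_1+1}$ exactly when $r = 1$ --- equivalently exactly when $(j_2,\dots,j_l)$ was decreasing, equivalently exactly when $(j_1,\dots,j_l)$ is decreasing --- and which otherwise is a strictly decreasing monomial with $\geq 2$ factors and top index $j_1 + 1$.

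Suppose instead $j_1 = j_2$, hence $j_1 > j_3$. Then $\tau_{j_1}\tau_{j_2} = \rho\tau_{j_1+1}$ and the product is $\rho\,\tau_{j_1+1}\,(\tau_{j_3}\cdots\tau_{j_l})$. The crucial observation is that every remaining index is $\leq j_3 < j_1 + 1$, and (by the induction hypothesis, or trivially when $l = 3$) reducing $\tau_{j_3}\cdots\tau_{j_l}$ produces only $\tau$'s of index $\leq j_3 + 1 \leq j_1 < j_1 + 1$; therefore $\tau_{j_1+1}$ can be neither squared nor cancelled, and the product is a strictly decreasing monomial with $\geq 2$ factors (the empty case $l = 2$ having been treated as a base case). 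In particular no $H_\star$-multiple of $\tau_{j_1+1}$ alone occurs, consistent with the fact that for $l \geq 3$ a sequence with $j_1 = j_2$ is never decreasing.

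The main obstacle I anticipate is organizational: keeping the case analysis and the $\rho$-exponents straight, and checking at each recursive step that the shorter sequence still satisfies every hypothesis (weakly decreasing, no value thrice, lead $< n$). The one genuinely load-bearing idea is the observation used in the last two paragraphs --- once $\tau_a^2 = \rho\tau_{a+1}$ raises an index, all surviving factors have strictly smaller index --- which is exactly what forces the dichotomy between total collapse (the decreasing sequences) and getting stuck with at least two factors, and it is precisely here that the hypothesis $j_i > j_{i+2}$ enters.
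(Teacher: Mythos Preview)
Your argument is correct and follows essentially the same route as the paper: induction on $l$, using that modulo $J_n$ the only surviving relation is $\tau_i^2=\rho\tau_{i+1}$, with the same case split on $j_1>j_2$ versus $j_1=j_2$ and the same inductive reduction of the tail. Your packaging of the $j_1>j_2$ case via the uniform notation $\rho^a\tau_{m_1}\cdots\tau_{m_r}$ and the dichotomy $j_1>m_1$ versus $j_1=m_1$ is a mild reorganization of the paper's ``tail decreasing / tail not decreasing'' split, but the content is identical.
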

\begin{proof}
  Induction on the length $l$.
  If $l = 2$ the statement is immediate from the relation $\tau_{j_1}^2 = \rho \tau_{j_1+1}$ mod $J_n$.
  Assume $l \geq 3$ and the statement is true for sequences of length $l-1$.
  There are three case distinctions:
  \begin{itemize}
  \item If $j_1 > j_2$ and $\{j_i\}_{i=2}^{l}$ is not decreasing:
    Then
    \[\tau_{j_2} \dots \tau_{j_l} = \rho^p\tau_{j_2'}\dots\tau_{j_k'}\] for some $j_i'$ and $k \geq 3$.
    After multiplication by $\tau_{j_1}$ there are still at least $(k-1)$ $\tau_i$-factors.
  \item If $j_1 > j_2$ and $\{j_i\}_{i=2}^{l}$ is decreasing:
    Then \[\tau_{j_1}\tau_{j_2} \dots \tau_{j_l} = \rho^{l-2}\tau_{j_1}\tau_{j_2+1} = \rho^{k-1}\tau_{j_1 + 1}\]
    if and only if $j_1 = j_2 + 1$.
    That is, if and only if $\{j_i\}_{i=1}^{l}$ is decreasing.
  \item If $j_1 = j_2$:
    Then $\tau_{j_1} \dots \tau_{j_l} = \tau_{j_1}^2 \prod_{i=3}^l \tau_{j_i} = \rho \tau_{j_1+1}\prod_{i=3}^l \tau_{j_i}$.
    The last factor $\prod_{i=3}^l \tau_{j_i}$ equals $\tau_{j_3}$,
    $\rho^{l-3}\tau_{j_3+1}$ or a product with multiple $\tau_i$-factors.
    In either case the product has multiple $\tau_i$-factors, since $j_1 > j_3$.
  \end{itemize}
\end{proof}
\begin{corollary}
  The Milnor primitives are generated by the recursive formula
  \[
  Q_{n+1} = [Q_n, \Sq^{2^{n+1}}] + \rho Q_n \Sq^{2^n} Q_{n-1}.
  \]
\end{corollary}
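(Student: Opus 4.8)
The plan is to derive the corollary from \Cref{main} by purely formal manipulations, using throughout that we are in characteristic two (so that $-1 = 1$), that the $Q_i$ commute with one another, and that $Q_i^2 = 0$; the last two facts are recorded in \Cref{known}.

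First I would isolate $Q_{n+1}$ in the identity of \Cref{main}. Its $k = 0$ summand is $Q_n \Sq^{2^{n+1}}$, so, rearranging in characteristic two,
\[
Q_{n+1} = [Q_n, \Sq^{2^{n+1}}] + \sum_{k=1}^{n} \rho^k\, Q_{n-k} Q_{n-k+1} \cdots Q_n\, \Sq^{2^{n+1-k}}.
\]
It therefore suffices to show that the tail sum on the right coincides with the single term $\rho\, Q_n \Sq^{2^n} Q_{n-1}$. For $n = 0$ the tail is empty and one reads off $Q_1 = [Q_0, \Sq^2]$ directly, consistent with the convention $Q_{-1} := 0$.

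The key step is to expand $\rho\, Q_n \Sq^{2^n} Q_{n-1}$ by applying \Cref{main} at index $n-1$ to the subexpression $\Sq^{2^n} Q_{n-1}$, which rewrites it as $Q_n + \sum_{j=0}^{n-1} \rho^j\, Q_{n-1-j} \cdots Q_{n-1}\, \Sq^{2^{n-j}}$. Left-multiplying by $\rho\, Q_n$ kills the leading term because $Q_n^2 = 0$, leaving $\sum_{j=0}^{n-1} \rho^{j+1}\, Q_n Q_{n-1-j} \cdots Q_{n-1}\, \Sq^{2^{n-j}}$. Reindexing by $k = j+1$, moving $Q_n$ past the commuting factors so that $Q_n Q_{n-1-j} \cdots Q_{n-1} = Q_{n-k} \cdots Q_n$, and noting $2^{n-j} = 2^{n+1-k}$, this is precisely the tail sum above. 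Combining the two displays yields the stated recursion.

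I do not expect a real obstacle: once \Cref{main} is available the argument is entirely formal. The only points requiring care are the bookkeeping in the reindexing of the sum and invoking $Q_n^2 = 0$ together with the commutativity of the $Q_i$ at exactly the right moments — all of which come straight from \Cref{known}.
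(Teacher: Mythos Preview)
Your argument is correct and follows exactly the route the paper takes: apply \Cref{main} once at level $n$ to isolate $Q_{n+1}$ and once at level $n-1$ to rewrite $\Sq^{2^n}Q_{n-1}$, then use $Q_n^2=0$ and the commutativity of the $Q_i$ from \Cref{known} to match the tail sum with $\rho\,Q_n\Sq^{2^n}Q_{n-1}$. The paper's proof is the one-line instruction ``Apply \Cref{main} to $\Sq^{2^{n+1}}Q_n$ and $\Sq^{2^n}Q_{n-1}$,'' which you have simply written out in full.
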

\begin{proof}
  Apply \Cref{main} to $\Sq^{2^{n+1}}Q_n$ and $\Sq^{2^n}Q_{n-1}$.
\end{proof}
\begin{remark}
  Using the same techniques as in \Cref{main} we can prove the following formulas for $n \geq 0$:
  \begin{align*}
    [Q_n, Sq^{2^i}] =& 0,\quad  i < n, \\
    [Q_n, Sq^{2^n}] =& \rho Q_n Q_{n-1}, \\
    [Q_n, Sq^{2^{i}}] =& Q_{n+1}\Sq^{2^i-2^{n+1}} \\
    &+ \sum_{k=1}^n \rho^{k} Q_{n-k}Q_{n-k+1}\dots Q_{n} \Sq^{2^{i} + 2^{n+1 - k} - 2^{n+1}}, \quad i > n.
  \end{align*}
  In particular, we can write the recursive formula
  \[
  Q_{n + 1} = [Q_n, \Sq^{2^{n+1}}] + \rho \Sq^{2^{n}}Q_n Q_{n-1}.
  \]
\end{remark}
The first few primitives are:
\begin{align*}
  Q_2 &= [Q_1, \Sq^4] + \rho Q_1 Q_0 \Sq^{2} \\
  Q_3 &= [Q_2, \Sq^8] + \rho Q_2 Q_1 \Sq^{4} + \rho^2 Q_2 Q_1 Q_0 \Sq^{2} \\
  Q_4 &= [Q_3, \Sq^{16}] + \rho Q_3 Q_2 \Sq^{8} + \rho^2 Q_3 Q_2 Q_1\Sq^{4} + \rho^3 Q_3 Q_2 Q_1 Q_0\Sq^2{}.
\end{align*}
In the Cartan basis the primitives $Q_0$ through $Q_6$ have 1, 2, 5, 14, 47, 213 and 1427 nonzero terms, respectively.
\section{A formula for $\PP^n$}
For $n \geq 1$ we can compute $\PP^n$ recursively using the formula
\begin{align}
  \label{eq:qn}
  [\PP^n, \Sq^{2^{n+1}}]
  & = \PP^{n+1} + \tau Q_n Q_0 \Sq^{2^{n+1} - 2} \\
  & + \sum_{i=0}^{n-1}\tau\rho^i Q_{n-i-1}Q_{n-i}\dots Q_n (\Sq^{2^{n-i}} + \rho Q_0 \Sq^{2^{n-i} - 2}). \nonumber
\end{align}
\Cref{eq:qn} is a combination of \Cref{main2} and \Cref{lem:qSq} below.
Note that $q_1 = \Sq^2$.

\begin{theorem}
  \label{main2}
  For $n \geq 1$ we have the formula
\begin{align}
  \label{eq:qn1}
  Sq^{2^{n+1}}\PP^n &= \PP^{n+1} + \widehat{\xi_1^{2^n}\xi_n} \\
  & + \tau \sum_{i=0}^{n-1}\rho^{i}Q_{n-i-1}Q_{n-i}\dots Q_n \Sq^{2^{n-i}} \nonumber\\
  & + \tau \rho \sum_{i=0}^{n-1}\rho^{i}Q_{n-i-1}Q_{n-i}\dots Q_n Q_0 \Sq^{2^{n-i} - 2}.\nonumber
\end{align}

\end{theorem}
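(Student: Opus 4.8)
The plan is to prove \Cref{eq:qn1} by imitating the proof of \Cref{main}, the only genuinely new ingredient being a more delicate version of \Cref{induction} accounting for the full quadratic relation of \Cref{dual} together with the shuffle $\eta_R\tau = \tau + \rho\tau_0$. Since the right-hand side of \Cref{eq:qn1} is already written in the Milnor basis, it suffices to evaluate $\Sq^{2^{n+1}}\PP^n$ on every monomial $\tau(E)\xi(R)$. By \Cref{known}, $\Sq^{2^{n+1}} = \PP^{(2^n)} = \widehat{\xi_1^{2^n}}$ and $\PP^n = \widehat{\xi_n}$, so the defining identity of the coproduct gives
\[
\langle \tau(E)\xi(R),\, \Sq^{2^{n+1}}\PP^n\rangle \;=\; [\Delta(\tau(E)\xi(R)),\, \Sq^{2^{n+1}}\tensor\PP^n],
\]
and the task becomes to extract the $H_\star$-coefficient of $\xi_1^{2^n}\tensor\xi_n$ in $\Delta(\tau(E)\xi(R))$, carefully tracking the left and right $H_\star$-actions coming from $\eta_L$ and $\eta_R$.

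First I would introduce auxiliary ideals $I, J \subset \A_\star$ in the spirit of $I_n, J_n$ from \Cref{main}; indeed $I$ may be taken to be $I_n$ itself, so that $\A_\star/I$ retains only $1,\xi_1,\dots,\xi_1^{2^n}$ together with the $\tau_0$-corrections $\tau_0,\tau_0\xi_1,\dots,\tau_0\xi_1^{2^n-1}$ forced by $\eta_R\tau = \tau + \rho\tau_0$, while $J$ is chosen so that $\A_\star/J$ retains only the classes needed to detect $\xi_n$ on the right, namely $\xi_n$ (with $\xi_n^2 = 0$) together with enough $\tau_i$'s that the quadratic relations become $\tau_i^2 \equiv \rho\tau_{i+1}$ for $i \leq n-2$ and $\tau_{n-1}^2 \equiv \tau\xi_n + \rho\tau_0\xi_n$ modulo $J$. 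As in \Cref{main} one checks $[I\tensor_{r,l}\A_\star,\, \Sq^{2^{n+1}}\tensor\PP^n] = 0 = [\A_\star\tensor_{r,l}J,\, \Sq^{2^{n+1}}\tensor\PP^n]$, so the bracket descends to a reduced coproduct $\DeltaT$ on $\A_\star/I\tensor_{r,l}\A_\star/J$ and it suffices to evaluate $\DeltaT$ on the monomial basis. From \Cref{coprod1} and \Cref{coprod2} one computes modulo these ideals $\DeltaT(\xi_{n+1}) = \xi_1^{2^n}\tensor\xi_n$, $\DeltaT(\xi_k) = 0$ for $2 \leq k \leq n-1$ and for $k \geq n+2$, and $\DeltaT(\tau_k)$ retaining essentially the shape $1\tensor\tau_k + \xi_1^{2^{k-1}}\tensor\tau_{k-1}$ it has in \Cref{main}. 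The new feature is that a right-hand factor $\tau_{n-1}^2$ now produces $\xi_n$, and that pushing the resulting scalar $\tau$ from the right tensor factor to the left one via $\eta_R\tau = \tau + \rho\tau_0$ feeds a $\tau_0$ into the left factor, which may then meet a $\tau_0$-factor coming from $M$ and reduce back to a power of $\xi_1$ via $\tau_0^2 \equiv \tau\xi_1 + \rho\tau_0\xi_1$; this shuffle is the origin of the extra scalar $\tau$, the extra power of $\rho$, and the extra $Q_0$ in the last two sums of \Cref{eq:qn1}.

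The heart of the argument is the combinatorial classification of the monomials $M = \tau(E)\xi(R)$ for which $\xi_1^{2^n}\tensor\xi_n$ occurs in $\DeltaT(M)$. Two contributions are immediate: $M = \xi_{n+1}$, giving the summand $\PP^{n+1}$, and $M = \xi_1^{2^n}\xi_n$, giving $\widehat{\xi_1^{2^n}\xi_n}$ with coefficient $1$ (in $\Delta(\xi_1^{2^n})\Delta(\xi_n)$ only the term $(\xi_1^{2^n}\tensor 1)(1\tensor\xi_n)$ survives). For every other $M$ one expands $\DeltaT$ over a consecutive $\tau$-run and requires the accumulated right-hand factor $\tau_{j_1}\cdots\tau_{j_l}$ to reduce modulo $J$ to an $H_\star$-multiple of $\xi_n$, after which any $\tau$-scalars are transported onto the left factor as above. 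This calls for a refined analogue of \Cref{induction}: such a product reduces to a multiple of $\xi_n$ precisely for the maximal strictly decreasing runs ending at $\tau_n$, where a single application of $\tau_{n-1}^2 \equiv \tau\xi_n + \rho\tau_0\xi_n$ is made; the sub-branch in which the $\tau_0$ pushed down by the $\eta_R\tau$-shuffle is absorbed into a $\tau_0$-factor already present in $M$ (so that the left factor is again a power of $\xi_1$, but $M$ carries an extra $\tau_0$ and one fewer $\xi_1$) produces the family $\tau\rho^{i+1}Q_{n-i-1}\cdots Q_n Q_0\Sq^{2^{n-i}-2}$, while the complementary sub-branch produces $\tau\rho^{i}Q_{n-i-1}\cdots Q_n\Sq^{2^{n-i}}$. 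Matching the accumulated powers of $\rho$ and the residual $\xi_1$-exponents against $\xi(R)$, exactly as the passage to $\xi_1^{2^n}\tensor\tau_n$ was carried out in \Cref{main}, then reads off the stated coefficients, and \Cref{known} identifies $Q(E)\PP^R$ with the asserted products of $Q_i$'s and Steenrod squares.

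The step I expect to be the main obstacle is precisely this refined version of \Cref{induction}. In \Cref{main} the reduction $\tau_j^2 \equiv \rho\tau_{j+1}$ modulo $J_n$ was clean; here the three summands of the quadratic relation interact with one another, with the surrounding $\tau_j$- and $\xi_1$-factors, and with the $\eta_R\tau$-shuffle, so one must argue carefully that each $\tau$-run produces $\xi_1^{2^n}\tensor\xi_n$ along exactly the branches described, that the left-hand $\xi_1$-bookkeeping yields precisely the exponents $2^{n-i}$ and $2^{n-i}-2$ occurring in the Steenrod squares on the right-hand side, and that no further unaccounted contributions arise. (The formula \Cref{eq:qn1} is, for this last reason, not written in lowest terms: on full expansion in the Milnor basis some monomials acquire a coefficient $2\tau\rho^{\bullet} = 0$.) Once this lemma and the two immediate contributions are in hand, \Cref{main2} follows by collecting coefficients, and combined with \Cref{lem:qSq} it yields the recursion \Cref{eq:qn}.
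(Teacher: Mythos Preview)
Your proposal is correct and follows essentially the same approach as the paper: reduce via carefully chosen ideals to a truncated coproduct $\DeltaT$, pick off the immediate contributions from $\xi_{n+1}$ and $\xi_1^{2^n}\xi_n$, and then classify the $\tau$-monomials whose reduced coproduct hits $\xi_1^{2^n}\tensor\xi_n$ by a refinement of \Cref{induction} (this is exactly the paper's \Cref{induction2}); the $\eta_R\tau=\tau+\rho\tau_0$ mechanism you isolate is precisely what drives the paper's three-case analysis. The only discrepancies are cosmetic: the paper's $J_n$ also kills $\tau_i\xi_n$ for $i<n$, so that $\tau_{n-1}^2\equiv\tau\xi_n$ rather than $\tau\xi_n+\rho\tau_0\xi_n$ (harmless, since the extra piece pairs to zero), and the paper makes the boundary case $E=b_{n,n}$ explicit where the two ``sub-branches'' you describe cancel against each other---this is the mod~$2$ cancellation you allude to at the end.
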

\begin{remark}
  Note that the term for $i = n-1$ in the last sum is zero,
  and the term for $i=n-2$ cancel with the term for $i=n-1$ in the first sum.
  That is, the terms on the right hand side are not linearly independent.
\end{remark}
\begin{proof}
  \Cref{eq:qn1} is equivalent to 
  \[
  \langle \tau(E)\xi(R), \Sq^{2^{n+1}} \PP^n \rangle = \begin{cases}
    1 & \text{if}\ E=0, R=a_{n+1} \text{ or } a_n + (2^n), \\
    \tau \rho^i & \text{if}\ E=b_{n,i}, R=(2^{n-i - 1}),\, 0 \leq i < n, \\
    \tau \rho^{i+1} & \text{if}\ E=b_{n,i} + (1), R=(2^{n-i - 1} - 1),\, 0 \leq i < n-1, \\
    0 & \text{otherwise}.
    \end{cases}
  \]
  
  Consider the ideals,
  \begin{align*}
    I_n &:= (\xi_1^{2^{n}+1}, \xi_2, \xi_3, \dots, \tau_1, \tau_2, \dots), \\
    J_n &:= (\xi_1, \xi_2, \dots, \xi_{n-1}, \xi_n^2, \xi_{n+1}, \dots, \tau_0\xi_n, \dots \tau_{n-1}\xi_n, \tau_{n}, \tau_{n+1}, \dots).
  \end{align*}
  Then $[I_n \tensor_{r,l} \A_\star, \Sq^{2^{n+1}} \tensor \PP^n ] = 0$
  and $[\A_\star \tensor_{r,l} J_n, \Sq^{2^{n+1}} \tensor \PP^n] = 0$,
  and we reduce to consider the composition
  \[
  \DeltaT : \A_\star \xrightarrow{\Delta} \A_\star \tensor_{r,l} \A_\star \xrightarrow{\widetilde{(-)}} \A_\star/I_n \tensor_{r,l} \A_\star/J_n.
  \]
  With these definitions the coproducts are
  \begin{align*}
    \DeltaT (\xi_n) &= 1 \tensor \xi_n, \\
    \DeltaT (\xi_{n+1}) &= \xi_1^{2^{n}} \tensor \xi_{n}, \\
    \DeltaT (\xi_k) &= 0,\quad k\neq 1, n, n+1, \\
    \DeltaT (\tau_k) &= 1 \tensor \tau_k + \xi_1^{2^{k-1}} \tensor \tau_{k-1},
  \end{align*}
  for $k \geq 1$.
  The pairings with $\xi_{n+1}$ and $\xi_{1}^{2^n}\xi_{n}$ are immediate.

  Since $\DeltaT(\xi_k) = 0$ for $k \neq 1, n,n+1$ we may restrict to consider the coproducts of monomials $\tau(E)$,
  as the factor $\xi(R) = \xi_{1}^{p}$ is completely determined by $\tau(E)$.

  Let $M$ be the monomial $\tau(E)$.
  Since $\DeltaT(\tau_{k}) = 0$, when $ k > n$,
  we may suppose $E=(\epsilon_0, \epsilon_1, \dots)$ where $\epsilon_i = 0$ for $i > n$.

  Consider $n \geq i_1 > i_2 > \dots > i_l$ and the sequence $E = (\epsilon_0, \epsilon_1, \dots)$ which is nonzero in the positions $i_j$.
  Any term in the coproduct $\DeltaT(\tau(E))$ is of the form
  \begin{equation}
    \label{term1}
    \prod_{j \in J} \xi_{1}^{2^{{i_j}-1}} \tensor \prod_{j \in J} \tau_{i_j - 1} \prod_{j \in J'} \tau_{i_j},
  \end{equation}
  for some $J \subseteq [1, l]$ and $J' := [1, l]\setminus J$.
  If $i_l=0$ we also have terms of the form
  \begin{equation}
    \label{term2}
    \tau_0 \prod_{j \in J} \xi_{1}^{2^{{i_j}-1}} \tensor \prod_{j \in J} \tau_{i_j - 1} \prod_{j \in J'} \tau_{i_j},
  \end{equation}
  for some $J \subseteq [1, l-1]$, and $J' := [1, l-1]\setminus J$.

  By \Cref{induction2} the last factor of the tensor product,
  $\prod_{j \in J} \tau_{i_j - 1} \prod_{j \in J'} \tau_{i_j}$,
  contains a multiple $\xi_n$ if and only if 
  the sequence $i_1 - \chi_J(1) \geq \dots \geq i_{l'} - \chi_J(l')$ is decreasing (in the sense of \Cref{induction2}) and $n-1 = i_1 - \chi_J(1)$
  (here $l' = l$ or $l-1$, and $\chi_J$ is the characteristic function of $J$, i.e. $\chi(i) = 1$ if $i \in J$ and 0 otherwise).
  That is, if and only if $J = [1, l')$ and $i_1 = n$. Assume that $E$ is such a sequence.
  Then
  \begin{align*}
  \prod_{j \in J} \xi_{1}^{2^{{i_j}-1}} \tensor \prod_{j \in J} \tau_{i_j - 1} \prod_{j \in J'} \tau_{i_j}
  &=
  \xi_1^{2^{n-1} - 2^{n - l'}} \tensor \tau \rho^{l' - 2}\xi_n \\
  &=
  \rho^{l'-2}(\tau + \rho\tau_0)\xi_1^{2^{n-1} - 2^{n - l'}} \tensor \xi_n.
  \end{align*}
  There are three case distinctions:
  \begin{itemize}
    \item If $i_l \neq 0$: Then there are only terms of the form of \Cref{term1}, and
      the term $\tau\rho^{l-2}\xi_1^{2^{n-1} - 2^{n - (l-1)}} \tensor \xi_n$ is the only term of the coproduct
      which can contribute with a nonzero pairing.
      This gives the pairing for monomials with $E = b_{n,l}$ for $0 \leq l < n$.
    \item If $i_{l-1} - 1 > i_l = 0$: Then there are terms of the form of both \Cref{term1} and \Cref{term2},
      however only the latter is of the form $a\xi_1^x \tensor \xi_n$. It equals,
      $\tau\rho^{l-2}\xi_1^{2^{n-1} - 2^{n - (l-2)}+1} \tensor \xi_n$.
      This gives the pairing for monomials with  $E = (1) + b_{n, l}$ for $0 \leq l < n-1$.
    \item If $i_{l-1} - 1 = i_l = 0$, i.e., $E = b_{n, n}$:
      Then terms of the form of both \Cref{term1} and \Cref{term2} contribute and cancel each other.
      More precisely, the term with $J=[1, l]$ from \Cref{term1} cancels the term with $J=[1, l-1]$ from \Cref{term2}.
      Hence the pairing is zero in this case.
  \end{itemize}
\end{proof}
\begin{lemma}
  \label{induction2}
  Consider a sequence of $l \geq 2$ nonnegative integers $\{j_i\}_{i=1}^{l}$ such that $n > j_1 \geq j_2 \geq \dots \geq j_l$ and $j_i > j_{i+2}$.
  
  Then the product $\tau_{j_1}\dots \tau_{j_l}$ is an $H_\star$-multiple of $\tau_{j_1+1}$ or $\xi_n$ modulo $J_n$ if and only if
  $j_2 = j_1 - 1, \dots, j_{i} = j_1 - i, \dots, j_{l-1} = j_1 - (l-1), j_{l} = j_1-(l-1)$,
  in which case the product equals $\rho^{l-1} \tau_{j_1 + 1}$ or $\rho^{l-2}\xi_{n}$, and we say that the sequence is decreasing.
  The product equals $\rho^{l-2}\xi_{n}$ only if $j_1 = n-1$.
  Otherwise the product equals $\rho^p\tau_{j_1'}\dots\tau_{j_k'}\tau^\epsilon\xi_{n}^{\epsilon}$,
  for some $p \geq 0 $, $\epsilon \in \{0, 1\}$, $k \geq 2$, and $j_1+1 \geq j_1' > \dots > j_k'$.
\end{lemma}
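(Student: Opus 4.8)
The plan is to mimic the proof of \Cref{induction}, inducting on the length $l$. The starting point is to compute the image of the relation in (\ref{dual}) modulo $J_n$. Since $\xi_k \in J_n$ for every $k \neq n$, while $\xi_n^2$, the products $\tau_j\xi_n$ for $j \geq 0$, and the generators $\tau_m$ for $m \geq n$ all lie in $J_n$, the relation $\tau_i^2 = \tau\xi_{i+1} + \rho(\tau_{i+1} + \tau_0\xi_{i+1})$ reduces to
\[
\tau_i^2 \equiv \rho\,\tau_{i+1}\ (i \leq n-2), \qquad \tau_{n-1}^2 \equiv \tau\,\xi_n, \qquad \tau_j\,\xi_n \equiv 0 \pmod{J_n}.
\]
Compared with \Cref{induction}, the only new feature is that squaring the top generator $\tau_{n-1}$ produces $\xi_n$ (with coefficient $\tau$) rather than $\rho\tau_n$, and that $\xi_n$ is annihilated by every $\tau_j$.

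For the base case $l = 2$: if $j_1 > j_2$ then $\tau_{j_1}\tau_{j_2}$ is already of the asserted form (with $p = 0$, $\epsilon = 0$, $k = 2$) and is not decreasing; if $j_1 = j_2$ then the displayed relations give $\tau_{j_1}^2 = \rho\tau_{j_1+1}$ when $j_1 \leq n-2$ and $\tau_{j_1}^2 = \tau\xi_n$ when $j_1 = n-1$, which is exactly the decreasing case (here $l-1 = 1$), and the $\xi_n$ alternative indeed occurs only for $j_1 = n-1$.

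For the inductive step $l \geq 3$, I would split into the same three cases as in \Cref{induction}, applying the inductive hypothesis to the tail $\{j_i\}_{i=2}^{l}$ (or to $\{j_i\}_{i=3}^{l}$ when $j_1 = j_2$) and then left-multiplying by the remaining factor(s). Tracking the $\rho$-exponent is identical to \Cref{induction}: each of the $l-1$ collisions of equal indices contributes one factor of $\rho$, except that the final collision contributes the factor $\tau$ — and switches the output from $\tau_{j_1+1}$ to $\xi_n$ — precisely when it is a square of $\tau_{n-1}$, which forces $j_1 = n-1$. The point that needs care is that an $\xi_n$ survives only if no surplus $\tau$-factor is left over: as soon as a factor $\tau_{j'}$ with $j' \leq n$ multiplies $\xi_n$ the product vanishes, using $\tau_j\xi_n \equiv 0 \pmod{J_n}$ together with the bound $j_i' \leq j_1 + 1 \leq n$ on the surviving indices. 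One then checks case by case that a nonzero multiple of $\tau_{j_1+1}$ or of $\xi_n$ emerges exactly in the decreasing case, with coefficient $\rho^{l-1}$ or $\tau\rho^{l-2}$ respectively, and that otherwise at least two distinct $\tau$-factors remain (or the product is $0$).

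The main obstacle is the bookkeeping in the case $j_1 > j_2$: the inductive hypothesis only guarantees $j_2' \leq j_2 + 1$ for the leading surviving index of the reduced tail, so after left-multiplying by $\tau_{j_1}$ one may again hit $j_1 = j_2'$ and trigger a further squaring $\tau_{j_1}^2$, possibly iterating. One has to follow this chain of reductions and confirm that it never creates a spurious nonzero multiple of $\tau_{j_1+1}$ (or of $\xi_n$) outside the decreasing pattern — in particular that whenever the square $\tau_{n-1}^2 \equiv \tau\xi_n$ is triggered while a leftover $\tau$-factor is still present, the result is genuinely $0$. Once this chain is under control, matching the resulting monomial against the definition of a decreasing sequence, and verifying the bounds $k \geq 2$ and $j_1 + 1 \geq j_1' > \dots > j_k'$, is routine exactly as in \Cref{induction}.
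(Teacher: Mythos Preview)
Your proposal is correct and follows essentially the same approach as the paper: induction on $l$, with the same three case distinctions ($j_1 > j_2$ with tail not decreasing; $j_1 > j_2$ with tail decreasing; $j_1 = j_2$), and the same use of the relations $\tau_i^2 \equiv \rho\tau_{i+1}$ for $i \leq n-2$ and $\tau_{n-1}^2 \equiv \tau\xi_n$ modulo $J_n$.

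Your ``main obstacle'' is over-stated, however. In the case $j_1 > j_2$ with tail not decreasing, the inductive hypothesis gives the tail as $\rho^p\tau_{j_2'}\cdots\tau_{j_k'}$ (or $0$) with $j_2 + 1 \geq j_2' > \cdots > j_k'$ and $k \geq 3$. Since $j_1 \geq j_2 + 1 \geq j_2'$, left-multiplying by $\tau_{j_1}$ can trigger at most \emph{one} further squaring (when $j_1 = j_2'$), after which the new top index is $j_1 + 1 > j_1 = j_2' > j_3'$, so no further collision is possible. There is no chain to follow: the paper simply notes that at least $k-1 \geq 2$ distinct $\tau$-factors remain (or the product vanishes, if $j_1 = n-1$ and the squaring produces $\tau\xi_n$ which is then killed by the surviving $\tau_{j_3'}$). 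With this observation the bookkeeping is immediate.
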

\begin{proof}
  Induction on the length $l$.
  If $l = 2$ the statement is immediate from the relation
  \[
  \tau_{j_1}^2 = \begin{cases}
    \tau \xi_{n} & j_1 = n-1 \\
    \rho \tau_{j_1+1} & \text{otherwise}
  \end{cases}
  \quad\bmod J_n.
  \]
  Assume $l \geq 3$ and the statement is true for sequences of length $l-1$.
  There are three case distinctions:
  \begin{itemize}
  \item If $j_1 > j_2$ and $\{j_i\}_{i=2}^{l}$ is not decreasing:
    Then \[\tau_{j_2} \dots \tau_{j_l} = \rho^p\tau_{j_2'}\dots\tau_{j_k'}\tau^\epsilon\xi_{n}^{\epsilon},\] for some $j_i'$ and $k \geq 3$.
    After multiplication by $\tau_{j_1}$ there are still at least $(k-1)$ $\tau_i$-factors.
  \item If $j_1 > j_2$ and $\{j_i\}_{i=2}^{l}$ is decreasing:
    Then $\tau_{j_1}\tau_{j_2} \dots \tau_{j_l} = \rho^{l-2}\tau_{j_1}\tau_{j_2+1}$ equals $\rho^{l-1}\tau_{j_1 + 1}$
    or $\tau\rho^{l-2}\xi_{n}$
    if and only if $j_1 = j_2 + 1$.
    That is, if and only if $\{j_i\}_{i=1}^{l}$ is decreasing.
  \item If $j_1 = j_2$:
    Then $\tau_{j_1} \dots \tau_{j_l} = \tau_{j_1}^2 \prod_{i=3}^l \tau_{j_i}$
    which is $\rho \tau_{j_1+1}\prod_{i=3}^l \tau_{j_i}$ or $\tau \xi_{n}\prod_{i=3}^l \tau_{j_i}$.
    The last factor $\prod_{i=3}^l \tau_{j_i}$ equals $\tau_{j_3}$, $\rho^{l-3}\tau_{j_3+1}$
    or a product with multiple $\tau_i$-factors.
    In either case the product has multiple $\tau_i$-factors, since $j_1 > j_3$.
  \end{itemize}
\end{proof}
\begin{lemma}
  \label{lem:qSq}
  \[
  \PP^n\Sq^{2^{n+1}} = \widehat{\xi_1^{2^n}\xi_n} + \tau Q_n Q_0 \Sq^{2^{n+1} - 2}.
  \]
\end{lemma}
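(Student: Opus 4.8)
The plan is to evaluate $\PP^n\Sq^{2^{n+1}}$ on the monomial basis of $\A_\star$, exactly as in the proofs of \Cref{main} and \Cref{main2}. By \Cref{known}, $\PP^n=\widehat{\xi_n}$ and $\Sq^{2^{n+1}}=\PP^{(2^n)}=\widehat{\xi_1^{2^n}}$, so the definition of the coproduct gives
\[
\langle \tau(E)\xi(R),\,\PP^n\Sq^{2^{n+1}}\rangle=\bigl[\Delta(\tau(E)\xi(R)),\,\PP^n\tensor\Sq^{2^{n+1}}\bigr],
\]
and the problem becomes that of reading off the $H_\star$-coefficient of $\xi_n\tensor\xi_1^{2^n}$ in $\Delta(\tau(E)\xi(R))$, the first tensor slot being paired against $\widehat{\xi_n}$ and the second against $\widehat{\xi_1^{2^n}}$. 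Rewriting the right hand side of the lemma in the Milnor basis via \Cref{known}, $\tau Q_nQ_0\Sq^{2^{n+1}-2}=\tau\,\widehat{\tau_0\tau_n\xi_1^{2^n-1}}$, so the claim is that this coefficient is $1$ when $\tau(E)\xi(R)=\xi_1^{2^n}\xi_n$, is $\tau$ when $\tau(E)\xi(R)=\tau_0\tau_n\xi_1^{2^n-1}$, and is $0$ for every other monomial.

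First I would pass to a quotient, using the ideals $I_n$ and $J_n$ from the proof of \Cref{main2}, but now with the roles of the two tensor factors interchanged: $\widehat{\xi_n}$ annihilates $J_n$ and $\widehat{\xi_1^{2^n}}$ annihilates $I_n$, so, without changing the pairing against $\PP^n\tensor\Sq^{2^{n+1}}$, the first factor of the coproduct may be reduced modulo $J_n$ and the second modulo $I_n$; that is, $\Delta$ may be replaced by its reduction $\DeltaT\colon\A_\star\to\A_\star/J_n\tensor_{r,l}\A_\star/I_n$. Assume $n\ge 2$ (the case $n=1$ is handled the same way after identifying $\xi_n$ with $\xi_1$). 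Reducing (\ref{coprod1}) and (\ref{coprod2}) modulo $J_n$ on the left and $I_n$ on the right yields
\[
\DeltaT(\xi_n)=\xi_n\tensor1,\quad \DeltaT(\xi_1)=1\tensor\xi_1,\quad \DeltaT(\xi_k)=0\quad(k\ne1,n),
\]
\[
\DeltaT(\tau_0)=\tau_0\tensor1+1\tensor\tau_0,\quad \DeltaT(\tau_n)=\xi_n\tensor\tau_0,\quad \DeltaT(\tau_k)=\tau_k\tensor1\quad(0<k<n),\quad \DeltaT(\tau_k)=0\quad(k>n).
\]

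With these formulas the evaluation on a monomial $M=\tau(E)\xi(R)$ is immediate and, in contrast to \Cref{main} and \Cref{main2}, needs no inductive combinatorial lemma. Since $\DeltaT$ kills $\xi_k$ for $k\ne1,n$, kills $\xi_n^2$ (which lies in $J_n$), kills $\tau_k$ for $k>n$, and since $\DeltaT(\tau_n)\DeltaT(\xi_n)=\xi_n^2\tensor\tau_0=0$, a nonzero contribution requires $M$ to be one of $\xi_1^r\tau(E')$, $\xi_1^r\xi_n\tau(E')$ or $\xi_1^r\tau_n\tau(E')$, with $E'$ supported on $\{0,\dots,n-1\}$. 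Demanding that some summand of $\DeltaT(M)$ have first slot equal to $\xi_n$ then forces $E'$ to have no entry in $\{1,\dots,n-1\}$ (a factor $\DeltaT(\tau_k)=\tau_k\tensor1$ would push $\tau_k$ into the first slot), and leaves exactly the monomials $M=\xi_1^r\xi_n$, $M=\xi_1^r\tau_n$ and $M=\xi_1^r\tau_0\tau_n$. Of these, $\DeltaT(\xi_1^r\xi_n)=\xi_n\tensor\xi_1^r$ contributes $1$ precisely for $r=2^n$; $\xi_1^r\tau_n$ has second slot $\tau_0\xi_1^r$ and contributes nothing; and for $\xi_1^r\tau_0\tau_n$ the two copies of $\tau_0$ produced by $\DeltaT(\tau_0)\DeltaT(\tau_n)$ both land in the second slot, so that slot is $\tau_0^2\xi_1^r=(\tau\xi_1+\rho\tau_1+\rho\tau_0\xi_1)\xi_1^r$, which, after reduction modulo $I_n$ and after discarding monomials that are not scalar multiples of a power of $\xi_1$, becomes $\tau\xi_1^{r+1}$ and hence contributes $\tau$ precisely for $r+1=2^n$. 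These are exactly the two values asserted.

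The substance of the argument is in two supporting verifications. One must check with care that $\widehat{\xi_n}$ really does annihilate $J_n$ in the left tensor factor and that $\widehat{\xi_1^{2^n}}$ annihilates $I_n$ in the right factor, so that $\DeltaT$ legitimately computes the pairing; this is what fixes the precise form of $I_n$ and $J_n$, and one must at the same time keep the quotient $\A_\star/I_n$ large enough to retain the defining relation $\tau_0^2=\tau\xi_1+\rho\tau_1+\rho\tau_0\xi_1$ that is responsible for the coefficient $\tau$. The second delicate point is the bookkeeping of the two $\rho$-terms in that last reduction: $\rho\tau_1\xi_1^{2^n-1}$ disappears because $\tau_1\in I_n$, and $\rho\tau_0\xi_1^{2^n}$ disappears because $\tau_0\xi_1^{2^n}$ is a monomial distinct from $\xi_1^{2^n}$ — and it is precisely these cancellations that make the final formula free of $\rho$.
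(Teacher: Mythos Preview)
Your argument is correct and follows exactly the paper's approach: the paper's proof is the one-line remark that this is a ``simpler variant'' of \Cref{main} and \Cref{main2} using the ideals
\[
I_n=(\xi_1,\dots,\xi_{n-1},\xi_n^2,\xi_{n+1},\dots,\tau_n,\tau_{n+1},\dots),\qquad
J_n=(\xi_1^{2^n+1},\xi_2,\xi_3,\dots,\tau_1,\tau_2,\dots),
\]
which are (up to your harmless inclusion of the extra generators $\tau_k\xi_n$) precisely the ideals you obtained by swapping the roles of $I_n$ and $J_n$ from \Cref{main2}. The only small omission is the monomial $M=\xi_1^r\xi_n\tau_0$, whose relevant summand $\xi_n\otimes\tau_0\xi_1^r$ pairs to zero for the same reason as your case $M=\xi_1^r\tau_n$.
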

\begin{proof}
  This is a simpler variant of \Cref{main} and \Cref{main2}
  using the ideals
  \begin{align*}
    I_n &:= (\xi_1, \dots, \xi_{n-1}, \xi_n^2, \xi_{n+1}, \dots, \tau_{n}, \tau_{n+1}, \dots), \\
    J_n &:= (\xi_{1}^{2^{n}+1}, \xi_{2}, \xi_{3}, \dots, \tau_{1}, \tau_{2}, \dots).
  \end{align*}
\end{proof}
\begin{remark}
  We have not been able to generalize Milnor's product formula for arbitrary elements in the Milnor basis.
  \Cref{main} and \Cref{main2} suggest that such a formula is more involved than the topological version.
  The complexity is due to the relation $\tau_i^2 = (\tau + \rho\tau_0)\xi_{i+1} + \rho \tau_{i+1}$,
  and the twisted tensor product induced by the right action $\eta_R(\tau) = \tau + \rho\tau_0$.
\end{remark}

\bibliography{main}{}
\bibliographystyle{plain}

\end{document}